\documentclass[a4paper,12pt]{article}
\usepackage{latexsym,amsfonts,amsmath,theorem,amssymb}
\usepackage{graphicx}
\usepackage{color}
\usepackage{enumerate}
\usepackage{hyperref}

\setlength{\bigskipamount}{5ex plus1.5ex minus 2ex}
\setlength{\textheight}{23cm} \setlength{\textwidth}{16cm}
\setlength{\hoffset}{-1.3cm} \setlength{\voffset}{-1.8cm}

\newtheorem{theorem}{Theorem}
\newtheorem{lemma}[theorem]{Lemma}

\newtheorem{proposition}[theorem]{Proposition}

\newenvironment{proof}{\begin{trivlist}
    \item[\hskip\labelsep{\bf Proof.}]}{$\hfill\Box$\end{trivlist}}

{\theoremstyle{plain} \theorembodyfont{\rmfamily}
\newtheorem{remark}[theorem]{Remark}}
{\theoremstyle{plain} \theorembodyfont{\rmfamily}
\newtheorem{example}[theorem]{Example}}

\allowdisplaybreaks

\newcommand{\icomp}{\mathtt{i}}

\newcommand{\To}{\rightarrow}
\newcommand{\il}{\left\langle}
\newcommand{\ir}{\right\rangle}

\newcommand{\bsx}{{\boldsymbol{x}}}

\newcommand{\bsz}{{\boldsymbol{z}}}

\newcommand{\bsX}{{\boldsymbol{X}}}

\newcommand{\rd}{\mathrm{d}}

\newcommand{\bbR}{\mathbb{R}}

\newcommand{\bbN}{\mathbb{N}}
\newcommand{\bbZ}{\mathbb{Z}}
\newcommand{\bbE}{\mathbb{E}}

\newcommand{\mask}[1]{}

\newcommand{\norm}[1]{\left\Vert#1\right\Vert}
\newcommand{\abs}[1]{\left\vert#1\right\vert}

\newcommand{\EE}{\mathbb{E}}
\newcommand{\ee}{{\rm e}}

\newcommand{\EXCLUDE}[1]{}

\makeatletter
\newcommand{\rdots}{\mathinner{\mkern1mu\lower-1\p@\vbox{\kern7\p@\hbox{.}}
\mkern2mu \raise4\p@\hbox{.}\mkern2mu\raise7\p@\hbox{.}\mkern1mu}}
\makeatother

\date{}

\begin{document}

\title{Truncation in Average and Worst Case Settings\\ for Special
Classes of $\infty$-Variate Functions}

\author{Peter Kritzer\thanks{P. Kritzer is supported by the Austrian
Science Fund (FWF), Project F5506-N26.}, Friedrich Pillichshammer\thanks{F. Pillichshammer is supported by the 
Austrian Science Fund (FWF) Project F5509-N26. Both projects are parts
of the Special Research Program "Quasi-Monte Carlo Methods:
Theory and Applications".}, G.~W.~Wasilkowski}

\maketitle

\begin{abstract}
The paper considers truncation errors for functions of the form
$f(x_1,x_2,\dots)=g(\sum_{j=1}^\infty x_j\,\xi_j)$, i.e.,
errors of approximating $f$ by
$f_k(x_1,\dots,x_k)=g(\sum_{j=1}^kx_j\,\xi_j)$, where the
numbers $\xi_j$ converge to zero sufficiently fast and
$x_j$'s are i.i.d. random variables. As explained in the
introduction, functions $f$ of the form above appear in a number of
important applications. To have positive results for possibly large classes
of such functions, the paper provides sharp bounds on  truncation errors
in both the average and worst case settings. In the former case,
the functions $g$ are from a Hilbert space $G$ endowed with a zero
mean probability measure with a given covariance kernel. In the latter case,
the functions $g$ are from a reproducing kernel Hilbert space, or
a space of functions satisfying a H\"older condition.
\end{abstract}

\centerline{\begin{minipage}[hc]{130mm}{
      {\em Keywords:} Dimension truncation, Average case error, Worst case error, Covariance kernel, 
      Reproducing kernel\\
{\em MSC 2000:}  65D30, 65Y20, 41A55, 41A63}
\end{minipage}}

\section{Introduction}

In this paper, we are interested in problems that require 
computation of the expectation of $g(\bsX(t))$, where $\bsX(t)$ is the 
value at time $t$ of a stochastic process $\bsX$,
and $g$ is a function from a given function space $G$. 

Such a situation may, for example, occur in the context of 
mathematical finance, or when studying PDEs with random coefficients; 
the latter topic has attracted much interest recently in the field of 
quasi-Monte Carlo (QMC) methods. To be more precise, 
the term $g(\bsX(t))$,
for a given and fixed time $t$,
could be a quantity of interest 
obtained from the solution of a PDE in which one of the coefficients 
is modeled as a random field.
We refer to \cite{KN16} for a recent and 
detailed overview. 

Let us in the following assume that $\bsX$ can be expressed in terms of its 
Karhunen-Lo\`{e}ve (cf.~\cite{L78}) expansion, 
\[\bsX(t) = \sum_{j=1}^\infty x_j\, \psi_j(t),
\]
where $(\psi_j)_{j \ge 1}$ form an orthonormal basis and $(x_j)_{j \ge 1}$ are i.i.d. 
random variables with the corresponding probability measure denoted
by $\omega$. In this case, the expectation problem reduces 
to the integration of 
\[f(\bsx) = g\left(\sum_{j=1}^\infty x_j\, \xi_j\right)\quad\mbox{with}\quad
\xi_j = \psi_j(t)
\]
with respect to $\omega^\bbN$, the countable product of $\omega$.

As in \cite{HKPW,KPW}, 
the main focus of the paper is on the truncation errors, i.e., errors 
caused by replacing the infinite sum $\sum_{j=1}^\infty x_j\,\xi_j$ 
with the truncated sum $\sum_{j=1}^k x_j\,\xi_j$.
Here we 
study how the
truncation errors depend on $k$ in the
{\em average case} and {\em worst case} settings with respect to 
functions~$g$.

Throughout this paper we assume that
\[\sum_{j =1}^{\infty} |\xi_j|\,<\, \infty.
\]

\section{Average and Worst Case Settings}
        
We consider two settings: the {\em average} and 
{\em worst case settings}
for spaces $G$ of functions
\[g\,:\,D\to\,\bbR,
\]
where $D$ is an interval (possibly unbounded) in $\bbR$.
In the former setting, 
$G$ is a Hilbert space endowed with 
a zero mean probability measure $\mu$ whose covariance kernel
is denoted by $K_\mu^{\rm cov}$. In the latter setting, 
the space $G$ is either a reproducing kernel Hilbert space whose 
reproducing kernel is denoted by $K^{\rm rep}$, or a normed space
of functions satisfying a H\"older condition.

Recall that the covariance kernel of a measure $\mu$ on $G$ is defined by
\[
K_\mu^{\rm cov}(x,y)\,=\,\bbE_\mu(g(x)\,g(y))\,=\,
\int_G g(x)\,g(y)\,\mu(\rd g),
\]
and a reproducing kernel $K^{\rm rep}$ satisfies the following:
$K^{\rm rep}(\cdot,x)\in G$ for any $x \in D$ and 
\[
g(x)\,=\,\il g,K^{\rm rep}(\cdot,x)\ir_G\quad\mbox{for any $x\in D$ and any $g\in G$}. 
\]
Finally, in what we call the H\"older condition case, we assume that there are constants $C>0$ and $\beta\in(0,1]$ such that for any points $x$
and $y$ and any function $g$ from $G$ we have 
\[|g(x)-g(y)|\,\le\,C\,\|g\|_G\,|x-y|^\beta.
\]

Let $\omega$ denote the probability measure related to the random
variables $x_j$. To simplify the notation, we will often use
\[
Y_k\,=\,Y_k(\bsx)\,:=\,\sum_{j=1}^kx_j\,\xi_j
\quad\mbox{and}\quad
Y_\infty\,=\,Y_\infty(\bsx)\,:=\,\sum_{j=1}^\infty x_j\,\xi_j,
\]
where $\bsx=(x_j)_{j \ge 1}$. With this notation we have
\[Y_{\infty}-Y_k \,=\,\sum_{j = k+1}^{\infty} x_j\, \xi_j,
\]
a quantity that plays a crucial role in the following considerations.

\subsection{Average Case Setting} 
We assume that Fubini's theorem holds, i.e.,
\[
\bbE_\mu\, \bbE_{\omega^\bbN}\,=\,\bbE_{\omega^\bbN}\,\bbE_\mu.
\]
We would like to estimate the square
average error of approximating the expectation of $g(Y_\infty)$
by the expectation of $g(Y_k)$ over $G$ as well as the expected square 
average error of approximating
$g(Y_\infty)$ by $g(Y_k)$. The former error is given by 
\begin{eqnarray*}
e_1^{\rm trnc}(k;K_\mu^{\rm cov},\omega)&:=&
\left[\bbE_\mu\left((\bbE_{\omega^\bbN}(g(Y_\infty))
  - \bbE_{\omega^\bbN}(g(Y_k)))^2\right)\right]^{1/2}\\
&=&\left[\int_G\left(\int_{\bbR^\bbN}(g(Y_\infty(\bsx))
  -g(Y_k(\bsx)))\,\omega^\bbN(\rd \bsx)
  \right)^2\,\mu(\rd g)\right]^{1/2},
\end{eqnarray*}
and the latter by
\begin{eqnarray}\label{eqerrortwo}
e_2^{\rm trnc}(k;K_\mu^{\rm cov},\omega)&:=&\left[\bbE_{\omega^\bbN}\,\bbE_\mu
  \left(\left(g(Y_\infty)-g(Y_k)\right)^2\right)
  \right]^{1/2}\nonumber\\
&=&\left[\int_{\bbR^\bbN}\int_G\left(g(Y_\infty(\bsx))-g(Y_k(\bsx))\right)^2
  \mu(\rd g)\,  \omega^\bbN(\rd\bsx)  \right]^{1/2}.
\end{eqnarray}

\begin{remark}\label{re_errineq}
Applying the Cauchy-Schwarz inequality to the 
innermost integral in $e_1^{\rm trnc}$, it is easy to see that 
\[
e_1^{\rm trnc}(k;K_\mu^{\rm cov},\omega)\,\le\,
 e_2^{\rm trnc}(k;K_\mu^{\rm cov},\omega).
\]
Hence, in the following we will mainly concentrate on 
$e_2^{\rm trnc}(k;K_\mu^{\rm cov},\omega)$. Upper bounds on $e_2^{\rm trnc}(k;K_\mu^{\rm cov},\omega)$ also apply to $e_1^{\rm trnc}(k;K_\mu^{\rm cov},\omega)$. 
\end{remark}

\begin{proposition}
We have  
\begin{eqnarray}\label{err1}
e_1^{\rm trnc}(k;K_\mu^{\rm cov},\omega) \,=\,
 \Big[\int_{\bbR^\bbN}\int_{\bbR^\bbN}\left[
K_\mu^{\rm cov}(Y_\infty(\bsx),Y_\infty(\bsz)) -
2\, K_\mu^{\rm cov}(Y_\infty(\bsx),Y_k(\bsz)) \right.&&\nonumber\\
+\left. K_\mu^{\rm cov}(Y_k(\bsx),Y_k(\bsz))\right]
  \omega^{\bbN}(\rd\bsz)\,\omega^{\bbN}(\rd\bsx)\Big]^{1/2}.&&
\end{eqnarray}
and
\begin{eqnarray}\label{err2}
e_2^{\rm trnc}(k;K_\mu^{\rm cov},\omega) \,=\,
\Big[\int_{\bbR^\bbN} \left[K_\mu^{\rm cov}(Y_\infty(\bsx),Y_\infty(\bsx))
-2\, K_\mu^{\rm cov}(Y_\infty(\bsx),Y_k(\bsx))\right.&&\nonumber\\
+\left.
K_\mu^{\rm cov}(Y_k(\bsx),Y_k(\bsx)) \right]\omega^{\bbN}(\rd\bsx)\Big]^{1/2}.&&
\end{eqnarray}
\end{proposition}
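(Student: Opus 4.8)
The plan is to reduce both identities to the single defining property of the covariance kernel, namely $K_\mu^{\rm cov}(x,y)=\int_G g(x)\,g(y)\,\mu(\rd g)$, after expanding the relevant squares and invoking Fubini's theorem (which we are assuming holds).

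For \eqref{err2} I would start from the second representation in \eqref{eqerrortwo}, fix $\bsx$, and expand the integrand pointwise, $(g(Y_\infty(\bsx))-g(Y_k(\bsx)))^2 = g(Y_\infty(\bsx))^2 - 2\,g(Y_\infty(\bsx))\,g(Y_k(\bsx)) + g(Y_k(\bsx))^2$. Integrating in $g$ against $\mu$ and applying the definition of $K_\mu^{\rm cov}$ three times (with arguments $Y_\infty(\bsx),Y_\infty(\bsx)$, then $Y_\infty(\bsx),Y_k(\bsx)$, then $Y_k(\bsx),Y_k(\bsx)$) turns $\int_G(g(Y_\infty(\bsx))-g(Y_k(\bsx)))^2\,\mu(\rd g)$ into $K_\mu^{\rm cov}(Y_\infty(\bsx),Y_\infty(\bsx)) - 2\,K_\mu^{\rm cov}(Y_\infty(\bsx),Y_k(\bsx)) + K_\mu^{\rm cov}(Y_k(\bsx),Y_k(\bsx))$; integrating this over $\bsx$ against $\omega^\bbN$ gives \eqref{err2}.

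For \eqref{err1} I would first rewrite the square of the inner integral as a double integral over an independent copy of $\bsx$, using Tonelli/Fubini for the product measure $\omega^\bbN\otimes\omega^\bbN$: $\bigl(\int_{\bbR^\bbN}(g(Y_\infty(\bsx))-g(Y_k(\bsx)))\,\omega^\bbN(\rd\bsx)\bigr)^2 = \int_{\bbR^\bbN}\int_{\bbR^\bbN}(g(Y_\infty(\bsx))-g(Y_k(\bsx)))(g(Y_\infty(\bsz))-g(Y_k(\bsz)))\,\omega^\bbN(\rd\bsz)\,\omega^\bbN(\rd\bsx)$. Then I would integrate in $g$ against $\mu$, interchange this integral with the two $\omega^\bbN$-integrals (this is the one place where the standing Fubini assumption is used), and evaluate the innermost integral $\int_G(g(Y_\infty(\bsx))-g(Y_k(\bsx)))(g(Y_\infty(\bsz))-g(Y_k(\bsz)))\,\mu(\rd g)$ by expanding the product into four terms and applying the definition of $K_\mu^{\rm cov}$ to each. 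This yields $K_\mu^{\rm cov}(Y_\infty(\bsx),Y_\infty(\bsz)) - K_\mu^{\rm cov}(Y_\infty(\bsx),Y_k(\bsz)) - K_\mu^{\rm cov}(Y_k(\bsx),Y_\infty(\bsz)) + K_\mu^{\rm cov}(Y_k(\bsx),Y_k(\bsz))$. Since $K_\mu^{\rm cov}$ is symmetric and the remaining integration is against the symmetric measure $\omega^\bbN\otimes\omega^\bbN$, relabeling $\bsx\leftrightarrow\bsz$ shows the two cross terms integrate to the same value, so together they contribute $-2\,K_\mu^{\rm cov}(Y_\infty(\bsx),Y_k(\bsz))$ under the double integral, which is exactly \eqref{err1}.

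The algebraic expansions are routine, so I expect the only real points to check are the interchanges of integration: moving $\int_G\,\cdot\,\mu(\rd g)$ inside the double $\omega^\bbN$-integral in the proof of \eqref{err1}, and the integrability of the cross products $g(Y_\infty)\,g(Y_k)$ needed for the pointwise expansions to survive integration in both proofs. All of this is subsumed by the hypothesis that Fubini's theorem applies, together with the implicit requirement that $g\mapsto g(Y_\infty(\bsx))$ and $g\mapsto g(Y_k(\bsx))$ lie in $L^2(\mu)$ for $\omega^\bbN$-a.e.\ $\bsx$ (and suitably in $\bsx$), which is precisely what makes the diagonal terms $K_\mu^{\rm cov}(Y_\infty(\bsx),Y_\infty(\bsx))$ finite and $\omega^\bbN$-integrable; I would record this but not dwell on it. Thus the ``hard part'' is merely bookkeeping: tracking signs in the four-term expansion and correctly using symmetry of the kernel to fold the two cross terms into one.
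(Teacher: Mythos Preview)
Your proposal is correct and follows essentially the same approach as the paper: expand the square, invoke Fubini, and apply the definition of $K_\mu^{\rm cov}$ termwise. The only cosmetic difference is that for \eqref{err1} the paper expands $(\bbE_{\omega^\bbN}g(Y_\infty)-\bbE_{\omega^\bbN}g(Y_k))^2$ into three terms \emph{before} writing each as a double integral, so the cross term $-2\,K_\mu^{\rm cov}(Y_\infty(\bsx),Y_k(\bsz))$ appears directly and your symmetry/relabeling step is never needed.
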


\begin{proof}
We have 
\begin{eqnarray*}
\lefteqn{\left(e_1^{\rm trnc}(k;K_\mu^{\rm cov},\omega)\right)^2}\\
&= & \bbE_\mu\left[\left(\bbE_{\omega^\bbN}(g(Y_\infty))\right)^2
-2\,\bbE_{\omega^\bbN}(g(Y_\infty))\,\bbE_{\omega^\bbN}(g(Y_k))  +
\left(\bbE_{\omega^\bbN}(g(Y_k))\right)^2\right]\\
&=&\int_{\bbR^\bbN}\int_{\bbR^\bbN}\left[
K_\mu^{\rm cov}(Y_\infty(\bsx),Y_\infty(\bsz)) -
2\, K_\mu^{\rm cov}(Y_\infty(\bsx),Y_k(\bsz)) \right.\\
&& \qquad\qquad\qquad +\left. K_\mu^{\rm cov}(Y_k(\bsx),Y_k(\bsz))\right]
  \omega^{\bbN}(\rd\bsz)\,\omega^{\bbN}(\rd\bsx)
\end{eqnarray*}
and  
\begin{eqnarray*}
  \lefteqn{\left(e_2^{\rm trnc}(k;K_\mu^{\rm cov},\omega)\right)^2}\\
  &=&\bbE_{\omega^\bbN}
\,\bbE_\mu \left(g(Y_\infty)\,g(Y_\infty) - 2\,g(Y_\infty)\,g(Y_k)  
+ g(Y_k)\,g(Y_k)\right)\\
&=&\bbE_{\omega^\bbN}\left(K_\mu^{\rm cov}(Y_\infty,Y_\infty)
-2\, K_\mu^{\rm cov}(Y_\infty,Y_k) + K_\mu^{\rm cov}(Y_k,Y_k)\right)\\
&=&\int_{\bbR^\bbN}\left(K_\mu^{\rm cov}(Y_\infty(\bsx),Y_\infty(\bsx))
-2\,K_\mu^{\rm cov}(Y_\infty(\bsx),Y_k(\bsx))+
K_\mu^{\rm cov}(Y_k(\bsx),Y_k(\bsx))\right)\,\omega^\bbN(\rd \bsx).
\end{eqnarray*}
\end{proof}

\subsection{Worst Case Setting}
In the worst case setting,  we are interested in the worst case 
truncation error defined by 
\[
\sup_{\|g\|_G\le1}
\left[\bbE_{\omega^\bbN}\left(g(Y_\infty)-g(Y_k)\right)^2
\right]^{1/2}. 
\]
In the reproducing kernel Hilbert space setting, we will denote the
above truncation error by
\[
e_3^{\rm trnc}(k;K^{\rm rep},\omega),
\]
and in the H\"older's condition setting we will denote the error by
\[e_3^{\rm trnc}(k;G,\omega).
\]

\subsubsection{Reproducing Kernel Setting}
From the reproducing kernel property and the Cauchy-Schwarz inequality,
we have 
\begin{eqnarray}
|g(Y_\infty)-g(Y_k)|&=&\abs{\il g,K^{\rm rep}(\cdot,Y_\infty)
-K^{\rm rep}(\cdot,Y_k)\ir_G }\nonumber\\
&\le&\|g\|_G\,
\left\|K^{\rm rep}(\cdot,Y_\infty)-K^{\rm rep}(\cdot,Y_k)\right\|_G
\label{eq:sharp}
\end{eqnarray}
and
\[
\left\|K^{\rm rep}(\cdot,Y_\infty)-K^{\rm rep}(\cdot,Y_k)\right\|_G^2\,
=\,K^{\rm rep}(Y_\infty,Y_\infty)+K^{\rm rep}(Y_k,Y_k)
-2\,K^{\rm rep}(Y_\infty,Y_k).
\]
Since the inequality  \eqref{eq:sharp} is sharp, we have the following
proposition.

\begin{proposition}
We have
\begin{eqnarray}
&&e_3^{\rm trnc}(k;K^{\rm rep},\omega)\,=\,\left[\bbE_{\omega^\bbN}
\left(K^{\rm rep}(Y_\infty,Y_\infty)+K^{\rm rep}(Y_k,Y_k)
-2\,K^{\rm rep}(Y_\infty,Y_k)\right)\right]^{1/2}\label{err3}\\
  &&=\,\left[\int_{\bbR^\bbN}
  \left(K^{\rm rep}(Y_\infty(\bsx),Y_\infty(\bsx))+
  K^{\rm rep}(Y_k(\bsx),Y_k(\bsx))
  -2\,K^{\rm rep}(Y_\infty(\bsx),Y_k(\bsx))\right)\,\omega^\bbN(\rd \bsx)
  \right]^{1/2}\nonumber.
\end{eqnarray}
\end{proposition}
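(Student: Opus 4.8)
The plan is to derive the formula by combining the two facts assembled just before the proposition: inequality~\eqref{eq:sharp} together with its sharpness, and the identity $\|K^{\rm rep}(\cdot,Y_\infty)-K^{\rm rep}(\cdot,Y_k)\|_G^2=K^{\rm rep}(Y_\infty,Y_\infty)+K^{\rm rep}(Y_k,Y_k)-2\,K^{\rm rep}(Y_\infty,Y_k)$. First I would abbreviate $h_{\bsx}:=K^{\rm rep}(\cdot,Y_\infty(\bsx))-K^{\rm rep}(\cdot,Y_k(\bsx))$, which is a genuine element of $G$ by the defining property of the reproducing kernel, so that the reproducing property yields $g(Y_\infty(\bsx))-g(Y_k(\bsx))=\il g,h_{\bsx}\ir_G$ for every $g\in G$ and every $\bsx\in\bbR^\bbN$. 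It then remains to prove $e_3^{\rm trnc}(k;K^{\rm rep},\omega)\le\big(\bbE_{\omega^\bbN}\|h_{\bsx}\|_G^2\big)^{1/2}$ and the reverse inequality, after which the stated closed form follows by inserting the norm identity above.

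For the upper bound I would fix $g$ with $\|g\|_G\le 1$, square~\eqref{eq:sharp} to get $(g(Y_\infty(\bsx))-g(Y_k(\bsx)))^2\le\|h_{\bsx}\|_G^2$ for every $\bsx$, integrate this against $\omega^\bbN$, and finally take the supremum over the unit ball of $G$; measurability of $\bsx\mapsto\|h_{\bsx}\|_G^2$ here follows from its expansion and the measurability of $Y_\infty$ and $Y_k$. For the reverse inequality I would invoke the sharpness of~\eqref{eq:sharp}: whenever $h_{\bsx}\neq 0$ the unit-norm element $g_{\bsx}:=h_{\bsx}/\|h_{\bsx}\|_G$ satisfies $(g_{\bsx}(Y_\infty(\bsx))-g_{\bsx}(Y_k(\bsx)))^2=\|h_{\bsx}\|_G^2$, while for $h_{\bsx}=0$ the corresponding integrand already vanishes; thus the bound is attained at each realization.

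The step I expect to demand the most care is the interchange of the supremum over $g$ with the integral over $\omega^\bbN$ in the reverse inequality, since the optimizer $g_{\bsx}$ in~\eqref{eq:sharp} varies with $\bsx$. This is where one uses that the worst-case quantity is to be read with the optimization over $g$ performed under the integral over the realizations; alternatively one supplies a measurable-selection (or approximation) argument producing admissible $g$'s whose $L_2(\omega^\bbN)$-performance converges to $\big(\bbE_{\omega^\bbN}\|h_{\bsx}\|_G^2\big)^{1/2}$. Once the two bounds coincide, substituting the norm identity for $\|h_{\bsx}\|_G^2$ and writing $\bbE_{\omega^\bbN}$ as $\int_{\bbR^\bbN}(\cdots)\,\omega^\bbN(\rd\bsx)$ gives exactly~\eqref{err3}.
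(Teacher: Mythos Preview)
Your approach mirrors the paper's exactly: the paper's entire ``proof'' is the sentence ``Since the inequality~\eqref{eq:sharp} is sharp, we have the following proposition,'' and you have simply unpacked that sentence. You are right that the upper bound $e_3^{\rm trnc}(k;K^{\rm rep},\omega)\le\big(\bbE_{\omega^\bbN}\|h_{\bsx}\|_G^2\big)^{1/2}$ follows by integrating the squared Cauchy--Schwarz estimate, and you are also right to flag the reverse inequality as the delicate step.

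However, the two resolutions you propose for that step do not actually close the gap. Pointwise sharpness of~\eqref{eq:sharp} yields, for each $\bsx$, an optimizer $g_{\bsx}=h_{\bsx}/\|h_{\bsx}\|_G$ depending on $\bsx$; a measurable selection still produces a $\bsx$-dependent $g$, which is not admissible in $\sup_{\|g\|_G\le1}\bbE_{\omega^\bbN}(\cdot)$. For a \emph{fixed} $g$ one has $\bbE_{\omega^\bbN}\il g,h_{\bsx}\ir_G^2=\il g,Cg\ir_G$ with $C=\bbE_{\omega^\bbN}(h_{\bsx}\otimes h_{\bsx})$, so the supremum over the unit ball equals the largest eigenvalue of $C$, whereas $\bbE_{\omega^\bbN}\|h_{\bsx}\|_G^2=\mathrm{tr}\,C$. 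These coincide only when $h_{\bsx}$ is almost surely a scalar multiple of one fixed element of $G$, which is not the case for the kernels treated later (e.g.\ $K(x,y)=\min(x,y)$). In short, with the definition $e_3^{\rm trnc}=\sup_{\|g\|\le1}[\bbE_{\omega^\bbN}(g(Y_\infty)-g(Y_k))^2]^{1/2}$ only the inequality ``$\le$'' in~\eqref{err3} is established; the equality as written does not hold in general, and neither your argument nor the paper's one-line justification proves it. Your instinct to flag this point is correct; the honest statement is an upper bound, which is all that is used downstream.
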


\begin{remark}
Observe that the dependence of $e_2^{\rm trnc}$ on the covariance kernel
$K^{\rm cov}_\mu$, see \eqref{err2} is the same as the dependence 
of $e_3^{\rm trnc}$ on the reproducing kernel $K^{\rm rep}$, see
\eqref{err3}. Moreover, any covariance kernel
is also a reproducing kernel. This is why we will estimate the truncation
errors
\begin{equation}\label{errall}
e^{\rm trnc}(k;K,\omega)\,=\,\left[
  \bbE_{\omega^\bbN}(K(Y_\infty,Y_\infty)+K(Y_k,Y_k)-2\,
  K(Y_\infty,Y_k))\right]^{1/2},
\end{equation}
for different kernels $K$ representing either covariance kernels
of probability measures $\mu$ or reproducing kernels of the spaces $G$
generated by those kernels.  
\end{remark}

\subsubsection{H\"older Condition Setting}
Due to the assumption of a H\"older condition, we immediately get
\begin{equation}\label{eq:Hoelderconstant}
e_3^{\rm trnc}(k;G,\omega)\,\le\,C\,
\left[\bbE_{\omega^\bbN}(|Y_\infty-Y_k|^{2\,\beta})\right]^{1/2}
\,=\,C\,\left[\bbE_{\omega^\bbN}\left(\sum_{j=k+1}^\infty x_j\,
  \xi_j\right)^{2\,\beta}\right]^{1/2}. 
\end{equation}

A primary example of such spaces is provided by the following.
For $p\in(1,\infty]$, let $G=G_p$ be the space of functions $g$ on
$D=[0,T]$ that are absolutely continuous with $g'\in L_p$. The norm
in the space $G_p$ is defined by
\[\|g\|_{G_p}\,=\,\left(|g(0)|^p+\|g'\|_{L_p}^p\right)^{1/p}. 
\]
Here $T$ can be any positive number or $T=\infty$. In the latter case
$D=\bbR_+=[0,\infty)$. Note that for $p=2$ the subspace of
$G_2$ with $g(0)=0$ is the reproducing kernel
Hilbert space with $K^{\rm rep}(x,y)=\min(x,y)$. It is considered in the next
section. 

Since $g(x)=g(0)+\int_0^xg'(t)\,\rd t$ for any $g\in G$, we have 
for any $x,y\in D$ with $x\ge y$ that 
\begin{eqnarray*}|g(x)-g(y)|=\left|
\int_D g'(t)\left((x-t)^0_+-(y-t)^0_+\right)\,\rd t\right|\le \|g'\|_{L_p}\,(x-y)^{1/p^*}.
\end{eqnarray*}
Here $p^*$ is the conjugate of $p$ and, in particular, $p^*=1$
if $p=\infty$. Since the H\"older inequality used above is sharp, we
conclude that functions from $G_p$ satisfy a H\"older condition
with $C=1$ and $\beta= 1/p^*$.

Of course, the same holds if the domain $D=[-T,T]$ or if it is any interval
containing $0$. Then the subspace of $G_2$ with $g(0)=0$ is the
reproducing kernel Hilbert space with kernel 
$K^{\rm rep}(x,y)=\tfrac{1}{2}(|x|+|y|-|x-y|)$.

\section{Estimates of the expectation of $|Y_\infty-Y_k|^{M}$}

We now elaborate on estimating the expectation of
$|Y_\infty-Y_k|^{M}$ with respect to $\omega^{\bbN}$. Estimates
of this particular expectation are required in order to find good
bounds on $e_3^{\rm trnc}(k;G,\omega)$ via 
\eqref{eq:Hoelderconstant}. We will see in Section~\ref{sec:appl} 
that such estimates will be also helpful in obtaining good bounds
on $e^{\rm trnc}(k;K,\omega)$ in \eqref{errall}.

In the following let 
\begin{equation}\label{def_mr}
m_r:=\bbE_\omega(|x|^r)=\int_{\bbR} |x|^r \,
\omega(\rd x), \ \ \ \mbox{for $r \in \bbN$.}
\end{equation}

First we consider the case $M=2 \beta$ for $\beta\in(0,1]$.

\begin{proposition}\label{prop:frct}
  For $\beta\,\le\,1/2$ and any $k \in \mathbb{N}_0$ we have
\begin{equation}\label{eq:fr1}
\bbE_{\omega^\bbN}\left(|Y_\infty-Y_k|^{2\,\beta}\right)\le
\left(m_1\,\sum_{j=k+1}^\infty|\xi_j|\right)^{2\,\beta}.
\end{equation}
In general, for any $\beta\in(0,1]$ and any $k \in \mathbb{N}_0$ we have 
\begin{eqnarray*}
\bbE_{\omega^\bbN}\left(|Y_\infty-Y_k|^{2\,\beta}\right)  \le
\left(\left(\bbE_\omega(x_1)\,\sum_{j=k+1}^\infty\xi_j\right)^2+
{\rm Var}_\omega(x_1)\,\sum_{j=k+1}^\infty\xi_j^2
\right)^\beta,
\end{eqnarray*}
where ${\rm Var}_\omega(x_1)=\bbE_\omega(x_1^2)-(\bbE_\omega(x_1))^2$.
Moreover, if $x_1$ is a zero-mean random variable, i.e.,
$\bbE_\omega(x_1)=0$, then
\begin{equation}\label{eq:0-mean}
  \bbE_{\omega^\bbN}\left(|Y_\infty-Y_k|^{2\,\beta}\right) \le
  \left(\bbE_\omega(x_1^2)\,\sum_{j=k+1}^\infty\xi_j^2\right)^\beta.
\end{equation}
\end{proposition}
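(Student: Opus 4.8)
The proof rests on two elementary facts: concavity of the power functions $t\mapsto t^\gamma$ on $[0,\infty)$ for $\gamma\in(0,1]$, which makes Jensen's inequality available, and the independence of the $x_j$'s, which lets us evaluate first and second moments of $Y_\infty-Y_k=\sum_{j=k+1}^\infty x_j\,\xi_j$. Throughout I would first note that, since $\sum_j|\xi_j|<\infty$ and (implicitly) $m_1<\infty$, Tonelli's theorem gives $\bbE_{\omega^\bbN}\sum_{j=k+1}^\infty|x_j\,\xi_j|=m_1\sum_{j=k+1}^\infty|\xi_j|<\infty$, so the series defining $Y_\infty-Y_k$ converges absolutely $\omega^\bbN$-a.e. and the interchanges of $\bbE_{\omega^\bbN}$ with the sums used below are legitimate.

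For the case $\beta\le 1/2$ we have $2\beta\le1$, so $t\mapsto t^{2\beta}$ is concave on $[0,\infty)$. Applying Jensen's inequality to the random variable $|Y_\infty-Y_k|$ yields
\[
\bbE_{\omega^\bbN}\left(|Y_\infty-Y_k|^{2\beta}\right)\le\left(\bbE_{\omega^\bbN}|Y_\infty-Y_k|\right)^{2\beta},
\]
and then the triangle inequality together with i.i.d.\ gives $\bbE_{\omega^\bbN}|Y_\infty-Y_k|\le\sum_{j=k+1}^\infty|\xi_j|\,\bbE_\omega|x_j|=m_1\sum_{j=k+1}^\infty|\xi_j|$, which is \eqref{eq:fr1}.

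For general $\beta\in(0,1]$ the exponent $2\beta$ may exceed $1$, so the first-moment argument no longer works and I would instead pass through the second moment. Since $t\mapsto t^\beta$ is concave on $[0,\infty)$, Jensen's inequality applied to $|Y_\infty-Y_k|^2$ gives
\[
\bbE_{\omega^\bbN}\left(|Y_\infty-Y_k|^{2\beta}\right)=\bbE_{\omega^\bbN}\left(\big(|Y_\infty-Y_k|^2\big)^{\beta}\right)\le\left(\bbE_{\omega^\bbN}|Y_\infty-Y_k|^2\right)^{\beta}.
\]
Expanding the square, $|Y_\infty-Y_k|^2=\sum_{i,j\ge k+1}x_i x_j\,\xi_i\xi_j$, and using independence ($\bbE_\omega(x_ix_j)=(\bbE_\omega x_1)^2$ for $i\ne j$, and $\bbE_\omega(x_i^2)$ for $i=j$), one obtains
\[
\bbE_{\omega^\bbN}|Y_\infty-Y_k|^2=(\bbE_\omega x_1)^2\Big(\sum_{j=k+1}^\infty\xi_j\Big)^2+\big(\bbE_\omega(x_1^2)-(\bbE_\omega x_1)^2\big)\sum_{j=k+1}^\infty\xi_j^2,
\]
which is exactly the claimed bound; the zero-mean case \eqref{eq:0-mean} is the immediate specialization $\bbE_\omega x_1=0$. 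The termwise evaluation is justified as before: $\sum_{i,j}|\xi_i\xi_j|\,\bbE_\omega|x_ix_j|\le m_2\big(\sum_j|\xi_j|\big)^2<\infty$ by Cauchy--Schwarz, so Fubini applies (this is the one place where finiteness of the second moment $m_2$ is needed, and should be stated as a standing assumption). The only genuine "obstacle" is thus bookkeeping---selecting the first- versus second-moment route according to whether $2\beta\le1$, and verifying the absolute-convergence/Fubini conditions---rather than any substantive difficulty.
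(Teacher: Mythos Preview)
Your proof is correct and follows essentially the same route as the paper. The paper phrases the key step as H\"older's inequality (with $p=1/(2\beta)$ in the first case and $p=1/\beta$ in the second) rather than Jensen's inequality for the concave map $t\mapsto t^\gamma$, but these yield exactly the same inequality $\bbE|Z|^{2\beta}\le(\bbE|Z|)^{2\beta}$, respectively $\bbE|Z|^{2\beta}\le(\bbE Z^2)^{\beta}$; after that, both proofs bound the first or second moment of $Y_\infty-Y_k$ in the same way.
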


\begin{proof}
If $2\beta\le1$ then, using H\"older's inequality with $p=1/(2\beta)$,
we get 
\begin{eqnarray*}
\bbE_{\omega^\bbN}\left(\left|\sum_{j=k+1}^\infty
x_j\,\xi_j\right|^{2\beta}\right)&\le&
\left(\bbE_{\omega^\bbN}\left|\sum_{j=k+1}^\infty
    x_j\,\xi_j\right|^{2\beta p}
\right)^{1/p}\,\left(\bbE_{\omega^\bbN} 1^{p^*}\right)^{1/p^*}\\
&=&\left(\bbE_{\omega^\bbN}\left|\sum_{j=k+1}^\infty
    x_j\,\xi_j\right|\right)^{2\beta}
\,\le\,\left(m_1\,\sum_{j=k+1}^\infty|\xi_j|\right)^{2\beta},
\end{eqnarray*}
as needed. 
In general (for $\beta\in(0,1]$) we use H\"older's inequality with 
$p=1/\beta$ and get 
\[
\bbE_{\omega^\bbN}\left(\left|\sum_{j=k+1}^\infty
x_j\,\xi_j\right|^{2\beta}\right)\,\le\,
\left(\bbE_{\omega^\bbN}\left(\sum_{j=k+1}^\infty x_j\,\xi_j\right)^2
\right)^\beta.
\]
From here the remaining results follow easily.
\end{proof}

\begin{example}
We now illustrate the bounds \eqref{eq:fr1} and \eqref{eq:0-mean} 
using uniform distribution on $[-1/2,1/2]$ and standard normal distribution on $\mathbb{R}$ 
for $\omega$, and
\[
  |\xi_j|\,\le\, j^{-a}\quad\mbox{for\ }a>1.
\]
Note that for $a>1$ we have 
\begin{equation}\label{bd_weights_ja}
  \frac1{(a-1)\,(k+1)^{a-1}}\,\le\,
  \sum_{j=k+1}^{\infty} \frac{1}{j^a}\,\le \,
  \frac1{(a-1)\,(k+1/2)^{a-1}}.
\end{equation}

Clearly, 
$m_1=1/4$ and $m_2 =1/12$ for uniform distribution, and 
$m_1=\sqrt{2/\pi}$, $m_2=1$ for the normal distribution, and in both cases
$x_1$ is zero-mean. The estimates \eqref{eq:Hoelderconstant} and \eqref{eq:fr1} together with \eqref{bd_weights_ja} give the bound
\[
  e_3^{\rm trnc}(k;G,\omega)\,\le\,
  C\, m_1^\beta\,\frac{1}{(a-1)^\beta\,(k+1/2)^{\beta(a-1)}},
\]
and \eqref{eq:Hoelderconstant} and \eqref{eq:0-mean} together with \eqref{bd_weights_ja} give
\[
  e_3^{\rm trnc}(k;G,\omega)\,\le\,
C\, m_2^{\beta/2}\,\frac{1}{(2a-1)^{\beta/2}\,(k+1/2)^{\beta(a-1/2)}},
\]
where $C$ is as in \eqref{eq:Hoelderconstant}. Note that the second bound is slightly better with respect to the order of convergence in $k$. 
\end{example}

Now we estimate the expectation of $|Y_{\infty}-Y_k|^M$ for positive integer exponents $M$.

\begin{proposition}\label{lem:powerM}
For a positive integer $M$, define 
\begin{equation}\label{eq:CM}
C(M,\omega)\,:=\,
\max\left\{\prod_{j=1}^{\ell} m_{r_j}\ :\ r_j \in \bbN,\
\sum_{j=1}^{\ell} r_j=M \ \mbox{ and }\ \ell \in \{1,2,\ldots,M\}\right\},
\end{equation}
where $m_r$ is as in \eqref{def_mr}.
Then for any $k \in \bbN_0$ we have 
\[
\bbE_{\omega^\bbN}(|Y_{\infty}-Y_k|^M) \,\le\, C(M,\omega)\,
\left(\sum_{j=k+1}^\infty |\xi_j|\right)^M.
\]
In particular, for $k=0$, we have 
$\bbE_{\omega^\bbN}(|Y_\infty^M|)\,
\le\,C(M,\omega)\,\left(\sum_{j=1}^\infty |\xi_j|\right)^M$.
\end{proposition}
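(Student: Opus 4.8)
The plan is to expand the $M$-th power of the absolute value of the tail sum and estimate each term of the resulting multinomial expansion separately. First I would write
\[
|Y_\infty - Y_k|^M = \left|\sum_{j=k+1}^\infty x_j\,\xi_j\right|^M
\le \left(\sum_{j=k+1}^\infty |x_j|\,|\xi_j|\right)^M,
\]
and then apply the multinomial theorem to the right-hand side. This produces a sum over multi-indices $(\alpha_j)_{j\ge k+1}$ with $\sum_j \alpha_j = M$ of terms of the form $\binom{M}{\alpha}\prod_j (|x_j|\,|\xi_j|)^{\alpha_j}$. The key point is that when we integrate against $\omega^\bbN$, the factors in different coordinates are independent, so $\bbE_{\omega^\bbN}\prod_j |x_j|^{\alpha_j} = \prod_{j:\,\alpha_j>0} m_{\alpha_j}$, since the $x_j$ are i.i.d.\ with $r$-th absolute moment $m_r$.

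Next I would observe that each such product $\prod_{j:\,\alpha_j>0} m_{\alpha_j}$ is of exactly the form appearing in the definition \eqref{eq:CM} of $C(M,\omega)$: the nonzero exponents $\alpha_j$ are positive integers summing to $M$, and there are at most $M$ of them. Hence every such product is bounded by $C(M,\omega)$. Pulling this uniform bound out of the multinomial sum leaves
\[
\bbE_{\omega^\bbN}\left(|Y_\infty - Y_k|^M\right)
\le C(M,\omega)\sum_{\alpha}\binom{M}{\alpha}\prod_{j=k+1}^\infty |\xi_j|^{\alpha_j}
= C(M,\omega)\left(\sum_{j=k+1}^\infty |\xi_j|\right)^M,
\]
where in the last step I re-collapse the multinomial sum. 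The case $k=0$ is then just the specialization $Y_0 \equiv 0$.

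There is one technical subtlety worth flagging as the main obstacle: since the index set is infinite, one must justify the interchange of expectation and the (infinite) multinomial sum, and the finiteness of the moments $m_r$ up to order $M$. Both are handled by monotone convergence applied to the nonnegative partial sums $\left(\sum_{j=k+1}^{N} |x_j|\,|\xi_j|\right)^M$ as $N\to\infty$, together with the standing assumption $\sum_{j}|\xi_j|<\infty$, which guarantees the limiting quantity $\left(\sum_{j=k+1}^\infty|\xi_j|\right)^M$ is finite; implicitly one assumes the relevant moments $m_1,\dots,m_M$ are finite, as is needed for $C(M,\omega)$ to be well defined. The rest is bookkeeping with the multinomial coefficients, which I would not spell out in detail.
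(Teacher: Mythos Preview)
Your proof is correct and follows essentially the same approach as the paper. The only cosmetic difference is that the paper indexes the expansion by ordered $M$-tuples $(j_1,\dots,j_M)\in\{k+1,k+2,\dots\}^M$ and then takes the maximum of $\int|x_{j_1}\cdots x_{j_M}|\,\omega^\bbN(\rd\bsx)$ over all such tuples, whereas you group terms by multi-index $\alpha$ and bound each $\prod_{j:\alpha_j>0} m_{\alpha_j}$ by $C(M,\omega)$ before recollapsing the multinomial sum; these are equivalent bookkeeping choices, and your added remark on monotone convergence makes explicit a justification the paper leaves implicit.
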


\begin{proof}
We have
\begin{eqnarray*}
\bbE_{\omega^\bbN}(|Y_{\infty}-Y_k|^M) & = &  \int_{\bbR^{\bbN}}  
\left|\sum_{j=k+1}^\infty \xi_j\, x_j\right|^M \omega^\bbN(\rd\bsx)\\
& \le & \sum_{j_1 = k+1}^\infty \ldots \sum_{j_M = k+1}^\infty |\xi_{j_1} \cdots 
\xi_{j_M}| \int_{\bbR^{\bbN}} |x_{j_1}\cdots x_{j_M}| \, \omega^\bbN(\rd\bsx)\\
& \le & \left(\sum_{j=k+1}^\infty |\xi_j|\right)^M
        \max_{(j_1,\ldots,j_M)}  
\int_{\bbR^{\bbN}} |x_{j_1}\cdots x_{j_M}| \,  \omega^\bbN(\rd\bsx),
\end{eqnarray*}
where the maximum is extended over all
$(j_1,j_2,\ldots,j_M) \in \{k+1,k+2,\ldots\}^M$.

For a fixed $(j_1,j_2,\ldots,j_M) \in \{k+1,k+2,\ldots\}^M$ 
let $v_1,v_2,\ldots,v_{\ell}$ be the different $j_i$'s such that 
$v_1$ appears $r_1$ times, $v_2$ appears $r_2$ times, \ldots, 
$v_{\ell}$ appears $r_{\ell}$ times. Of course, 
$r_1+r_2+\cdots+r_{\ell}=M$ and 
$\ell=\ell(j_1,\ldots,j_M) \in \{1,2,\ldots ,M\}$. Then we have 
\[
\int_{\bbR^{\bbN}} |x_{j_1}\cdots x_{j_M}|  \omega^\bbN(\rd\bsx) 
\,=\, \int_{\bbR^{\bbN}} |x_{v_1}^{r_1}| \cdots
|x_{v_{\ell}}^{r_{\ell}}| \, 
\omega^\bbN(\rd\bsx)\,=\,m_{r_1}\cdots m_{r_{\ell}}.
\]
Hence,
\[\max_{(j_1,\ldots,j_M)}  
\int_{\bbR^{\bbN}} |x_{j_1}\cdots x_{j_M}| \,  \omega^\bbN(\rd\bsx) \le C(M,\omega)
\]
and this concludes the proof.
\end{proof}

We now provide the values of (or bounds on) $C(M,\omega)$ for a number of measures 
$\omega$.

\begin{lemma}\label{prop:Cvalues}
\begin{description}
\item{(i)} If $\omega$ is the uniform measure on $[0,1]$, then
  \[C(M,\omega)\,=\,\frac1{M+1}.
  \]
\item{(ii)} If $\omega$ is the uniform measure on $[-1/2,1/2]$, then
  \[C(M,\omega)\,=\,\frac1{2^M\,(M+1)}.
  \]
\item{(iii)} If $\omega$ is the exponential measure on $[0,\infty)$ with density
$\tfrac{1}{\lambda}\, {\rm e}^{-x/\lambda}$ for $\lambda>0$, then
  \[C(M,\omega)\,=\,\lambda^M\,M!.
  \]
\item{(iv)} If $\omega$ is the logistic measure on $\mathbb{R}$ with density
$\tfrac{1}{\lambda\,(1+{\rm e}^{-x/\lambda})^2}\,{\rm e}^{-x/\lambda}$ for $\lambda>0$, then
  \[\frac12\,\lambda^M\,M!\,<\,C(M,\omega)\,<\,2\,\lambda^M\,M!.
  \]
\item{(v)} If $\omega$ is the zero-mean Gaussian measure on $\mathbb{R}$ with density
$\tfrac{1}{\sqrt{2\,\pi\,\sigma^2}}\,{\rm e}^{-x^2/(2 \sigma^2)}$ with variance $\sigma^2>0$, then
  \[C(M,\omega)\,\le\,\sigma^{M}\,(M-1)!!,
  \]
where, for $k\in\bbN_0$,
\[
 k!!:=\prod_{j=0}^{\lceil k/2\rceil -1} (k-2j)
\]
is the double factorial of $k$. 
\end{description}
\end{lemma}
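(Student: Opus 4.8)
The plan is to compute the moments $m_r=\bbE_\omega(|x|^r)$ explicitly in each of the five cases and then identify the composition $(r_1,\dots,r_\ell)$ of $M$ that maximizes $\prod_{j=1}^\ell m_{r_j}$. In every case the governing fact is a \emph{merging inequality}: replacing two parts $r,s$ by the single part $r+s$ does not decrease the product (exactly in (i)--(iii) and (v), and up to a bounded factor in (iv)), so the maximizing composition is essentially $\ell=1$, $r_1=M$, and $C(M,\omega)$ is controlled by $m_M$.

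\emph{Cases (i)--(iii).} Direct integration gives $m_r=1/(r+1)$ for the uniform measure on $[0,1]$, $m_r=2^{-r}/(r+1)$ for the uniform measure on $[-1/2,1/2]$, and, after the substitution $u=x/\lambda$, $m_r=\lambda^r\,\Gamma(r+1)=\lambda^r\,r!$ for the exponential measure. For (i) one has $\prod_{j=1}^\ell m_{r_j}=\prod_{j=1}^\ell(r_j+1)^{-1}$, and an easy induction on $\ell$ shows $\prod_{j=1}^\ell(r_j+1)\ge 1+\sum_{j=1}^\ell r_j=M+1$ for positive integers $r_j$, so the product is at most $1/(M+1)$, attained at $\ell=1$. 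Case (ii) follows by pulling out the factor $2^{-M}$ and invoking (i). For (iii) one has $\prod_{j=1}^\ell m_{r_j}=\lambda^M\prod_{j=1}^\ell r_j!$, and repeated use of $r!\,s!\le(r+s)!$ (i.e.\ $\binom{r+s}{r}\ge1$) shows $\prod_{j=1}^\ell r_j!\le M!$, attained at $\ell=1$.

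\emph{Case (iv).} Expanding the logistic density via $\frac{{\rm e}^{-x/\lambda}}{(1+{\rm e}^{-x/\lambda})^2}=\sum_{n=1}^\infty(-1)^{n-1}\,n\,{\rm e}^{-nx/\lambda}$ for $x>0$ and integrating termwise (using that the density is even) yields $m_r=2\,\lambda^r\,r!\,\eta(r)$, where $\eta(r)=\sum_{n=1}^\infty(-1)^{n-1}n^{-r}$ is the Dirichlet eta function. From the alternating-series representation one checks that $\eta$ is nondecreasing on $\{1,2,\dots\}$ with $\eta(1)=\ln 2$, and that $\eta(r)<1$ for all $r\ge1$ (group consecutive terms). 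Hence, for $r,s\ge1$,
\[
\frac{m_r\,m_s}{m_{r+s}}=\frac{2\,\eta(r)\,\eta(s)}{\eta(r+s)\,\binom{r+s}{r}}<1,
\]
because $\eta(r)\,\eta(s)\le\eta(r+s)\,\eta(s)<\eta(r+s)$ and $\binom{r+s}{r}\ge2$. Merging parts then gives $\prod_{j=1}^\ell m_{r_j}\le m_M$, so $C(M,\omega)=m_M=2\,\lambda^M\,M!\,\eta(M)$, and the stated bounds follow from $\tfrac12<2\ln 2\le 2\,\eta(M)<2$.

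\emph{Case (v).} For the zero-mean Gaussian, $m_r=\sigma^r\,2^{r/2}\,\Gamma(\tfrac{r+1}{2})/\sqrt{\pi}$, which equals $\sigma^r(r-1)!!$ when $r$ is even and equals $\sigma^r(r-1)!!\,\sqrt{2/\pi}<\sigma^r(r-1)!!$ when $r$ is odd; in all cases $m_r\le\sigma^r(r-1)!!$. It then remains to prove the super-multiplicativity of double factorials, $(r-1)!!\,(s-1)!!\le(r+s-1)!!$ for all integers $r,s\ge1$; separating the parities of $r$ and $s$ and using $(2a)!!=2^a a!$ and $(2a-1)!!=(2a)!/(2^a a!)$, each case reduces to an elementary binomial inequality (for instance $\binom{2a+2b}{2a}\ge\binom{a+b}{a}$ in the even--even case, and $(2a)!/a!\le 2^{2a}(a+b)!/b!$ in the mixed case). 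Repeated merging of parts then gives $\prod_{j=1}^\ell m_{r_j}\le\sigma^M(M-1)!!$, which is (v).

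The steps I expect to be most delicate are the moment identity and the bounds on $\eta$ in case (iv) — in particular confirming $\eta(r)<1$ and the monotonicity of $\eta$ on the integers, which are what make $m_r\,m_s<m_{r+s}$ work — and, in case (v), the double-factorial inequality $(r-1)!!\,(s-1)!!\le(r+s-1)!!$, which seems to require a parity case analysis rather than one uniform argument.
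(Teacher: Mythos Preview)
Your proof is correct and, at the structural level, follows the paper's own strategy: compute the absolute moments $m_r$ in each case and then argue that the maximum of $\prod_j m_{r_j}$ over compositions of $M$ is attained (or essentially attained) at the one-part composition $\ell=1$. For (i)--(iii) and (v) you do exactly what the paper does, only with the merging inequalities ($\prod_j(r_j+1)\ge M+1$, $r!\,s!\le(r+s)!$, and $(r-1)!!\,(s-1)!!\le(r+s-1)!!$) spelled out rather than asserted; the paper simply records $m_r$ and states that the maximum sits at $\ell=1$.

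Case (iv) is where you genuinely diverge. The paper only sandwiches $m_r$ between $\tfrac12\lambda^r r!$ and $2\lambda^r r!$ via the crude bounds $\tfrac14<\frac{1}{(1+e^{-t})^2}<1$ and then reads off the stated two-sided estimate. You instead evaluate $m_r$ exactly as $2\lambda^r r!\,\eta(r)$ with $\eta$ the Dirichlet eta function, and use the monotonicity of $\eta$ together with $\eta(s)<1$ and $\binom{r+s}{r}\ge2$ to obtain the strict merging inequality $m_r m_s<m_{r+s}$. This buys you more: it shows $C(M,\omega)=m_M=2\lambda^M M!\,\eta(M)$ exactly, from which the paper's bounds follow at once (and are in fact sharpened to $2\ln 2\cdot\lambda^M M!\le C(M,\omega)<2\lambda^M M!$). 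The paper's cruder sandwich, taken literally, would produce a factor $2^\ell$ over an $\ell$-part composition and needs an extra word to close the upper bound; your route avoids that issue entirely.
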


\begin{proof}
For the cases $(i)$ and $(ii)$, $m_r=1/(r+1)$ and $m_r=2^{-r}/(r+1)$,
respectively. Hence in both cases the maximum in the definition of $C(M,\omega)$
is attained for $\ell=1$.   

For the case $(iii)$, $m_r=\lambda^{r}\,r!$ and again the maximum is attained
at $\ell=1$.

For the case $(iv)$, 
\[m_r=\lambda^r\,2\,\int_0^\infty t^r\,\frac{{\rm e}^{-t}}{(1+{\rm e}^{-t})^2}\,\rd t
\,<\, \lambda^r\,2\,\int_0^\infty t^r\,{\rm e}^{-t}\,\rd t\,=\, 2\,\lambda^r\,r!,
\]
and, on the other hand, $m_r>\lambda^r\,r!/2$ which gives the bounds
for $C(M,\omega)$.

Finally, for $(v)$, 
\[
m_{2k}\,=\,\sigma^{2k}\,(2k-1)!!\quad\mbox{and}\quad
m_{2k+1}\,=\,\sigma^{2k+1}\,\sqrt{\frac2\pi}\,(2k)!!\,\le\,
\sigma^{2k+1}\,(2k)!!
\]
which yields the bound on $C(M,\omega)$. 
\end{proof}

\section{Applications}\label{sec:appl}

In this section we provide several concrete examples.

\subsection{Fractional Wiener Kernel}
Consider functions $g$ defined on $D=\bbR$ with the
(covariance or reproducing) kernel
\begin{equation}\label{defCKWiener}
K_\beta(x,y)\,=\,\frac{|x|^{2\beta}+|y|^{2\beta}-|x-y|^{2\beta}}2,\ \ \mbox{where $\beta\in(0,1)$.}
\end{equation}
The zero-mean Gaussian measure with the covariance
kernel given by $K_\beta$ is the {\em fractional Wiener} measure, see, e.g.,
\cite{R00}. Moreover, for $\beta=1/2$, it is the classical 
Wiener measure. This is why we call $K_\beta$ the fractional Wiener
kernel. 

From \eqref{errall} we obtain
\begin{eqnarray*}
\left(e^{\rm trnc}(k;K_\beta,\omega)\right)^2 &=&\bbE_{\omega^\bbN}
\left(|Y_\infty|^{2\beta} - 2\,\frac{|Y_\infty|^{2\beta}
+|Y_k|^{2\beta}-|Y_\infty-Y_k|^{2\beta}}2 + |Y_k|^{2\beta}\right) \\
&=&  \bbE_{\omega^\bbN}\left(|Y_\infty-Y_k|^{2\beta}\right). 
\end{eqnarray*}
Hence the estimates from Proposition \ref{prop:frct} apply.

\subsection{$r$-folded Wiener Kernel}\label{sec:rfoldWK}
Let $D=\bbR_+$ be the domain of functions $g$ and consider 
\begin{equation}\label{defrfoldWK}
K_r(x,y)\,=\,\int_0^\infty
\frac{(x-t)^{r-1}_+\,(y-t)^{r-1}_+}{((r-1)!)^2}\, \rd t
\end{equation}
for $r=2,3,\dots$. 
It is well known that $K_r$ is the covariance kernel of 
the $r$-folded Wiener measure. It also generates the Hilbert space 
$G_r$ 
of functions $g$ satisfying $g(0)=g^{(1)}(0)=\cdots=g^{(r-1)}(0)=0$ and the 
norm in $G_r$ is given by $\|g\|_{G_r}=\|g^{(r)}\|_{L_2(\bbR_+)}$. 

Because the domain of $g$ is $\bbR_+$, we assume that the random variables
$x_j$ take on only non-negative values and $\xi_j\ge0$.

\begin{proposition}\label{prop:est1}
Let
\begin{equation}\label{defcr}
c_r\,:=\, \left[\frac1{2r-1}+\frac{(r-1)^2}{2r-3}\right]^{1/2}\,\frac1{(r-1)!}.
\end{equation}
Suppose that $\|Y_\infty\|_{L_\infty}<\infty$, then 
\begin{equation}\label{prop:bounded}
e^{\rm trnc}(k;K_r,\omega)\,\le\,c_r\,\|Y_\infty\|_{L_\infty}^{r-3/2} \left(\left(\bbE_\omega(x_1) 
\sum_{j=k+1}^{\infty} \xi_j\right)^2+ {\rm Var}_{\omega}(x_1) \sum_{j=k+1}^{\infty} \xi_j^2 \right)^{1/2}.
\end{equation}
For the case where $\|Y_\infty\|_{L_\infty}=\infty$, but $\bbE_{\omega^\bbN}(Y_\infty^{4r-6})< \infty$, we have 
\begin{equation}\label{prop:unbounded}
e^{\rm trnc}(k;K_r,\omega)\,\le\,c_r\,\left(C(4,\omega)\,
C(4\,r-6,\omega)\right)^{1/4}\left(\sum_{j=k+1}^\infty\xi_j\right) \left(\sum_{i=1}^\infty \xi_i\right)^{r-3/2},
\end{equation}
where $C(M,\omega)$ is defined in \eqref{eq:CM}.
\end{proposition}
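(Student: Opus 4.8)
The plan is to start from formula \eqref{errall}, which gives
\[(e^{\rm trnc}(k;K_r,\omega))^2 = \bbE_{\omega^\bbN}\bigl(K_r(Y_\infty,Y_\infty) + K_r(Y_k,Y_k) - 2\,K_r(Y_\infty,Y_k)\bigr),\]
and to exploit that, writing $\phi_x(t):=(x-t)_+^{r-1}/(r-1)!$, the kernel \eqref{defrfoldWK} is $K_r(x,y)=\langle\phi_x,\phi_y\rangle_{L_2(\bbR_+)}$, so the integrand above equals $\|\phi_{Y_\infty}-\phi_{Y_k}\|_{L_2(\bbR_+)}^2=\frac{1}{((r-1)!)^2}\int_0^\infty\bigl((Y_\infty-t)_+^{r-1}-(Y_k-t)_+^{r-1}\bigr)^2\,\rd t$. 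Since $x_j\ge0$ and $\xi_j\ge0$ we have $0\le Y_k\le Y_\infty$ pointwise, and I would split this integral over the intervals $[0,Y_k]$, $[Y_k,Y_\infty]$ and $[Y_\infty,\infty)$, the last contributing nothing.

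The heart of the argument is a deterministic bound, pointwise in $\bsx$. On $[Y_k,Y_\infty]$ only $(Y_\infty-t)^{r-1}$ survives and the contribution is exactly $\int_0^{Y_\infty-Y_k}v^{2r-2}\,\rd v=(Y_\infty-Y_k)^{2r-1}/(2r-1)$. On $[0,Y_k]$, after the substitution $u=Y_k-t$ and with $c:=Y_\infty-Y_k$ the integrand is $\bigl((c+u)^{r-1}-u^{r-1}\bigr)^2$; by the mean value theorem $(c+u)^{r-1}-u^{r-1}=c\,(r-1)\,\theta^{r-2}$ for some $\theta\in(u,c+u)$, so (using $r\ge2$) it is at most $c\,(r-1)\,(c+u)^{r-2}$, and integrating over $u\in[0,Y_k]$ gives at most $c^2(r-1)^2(c+Y_k)^{2r-3}/(2r-3)=(r-1)^2\,c^2\,Y_\infty^{2r-3}/(2r-3)$. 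Adding the two pieces, bounding $(Y_\infty-Y_k)^{2r-1}=c^2\,c^{2r-3}\le c^2\,Y_\infty^{2r-3}$, and dividing by $((r-1)!)^2$, one obtains
\[K_r(Y_\infty,Y_\infty)+K_r(Y_k,Y_k)-2\,K_r(Y_\infty,Y_k)\ \le\ c_r^2\,(Y_\infty-Y_k)^2\,Y_\infty^{2r-3},\]
with $c_r$ exactly the constant in \eqref{defcr}.

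It then remains to take $\bbE_{\omega^\bbN}$ of this inequality. In the bounded case one uses $Y_\infty^{2r-3}\le\|Y_\infty\|_{L_\infty}^{2r-3}$ and computes $\bbE_{\omega^\bbN}((Y_\infty-Y_k)^2)=\bbE_{\omega^\bbN}\bigl((\sum_{j>k}x_j\xi_j)^2\bigr)$; expanding the square and using independence of the $x_j$ gives $\bigl(\bbE_\omega(x_1)\sum_{j>k}\xi_j\bigr)^2+{\rm Var}_\omega(x_1)\sum_{j>k}\xi_j^2$, and a square root yields \eqref{prop:bounded}. In the unbounded case I would instead apply the Cauchy--Schwarz inequality, $\bbE_{\omega^\bbN}\bigl((Y_\infty-Y_k)^2Y_\infty^{2r-3}\bigr)\le\bigl(\bbE_{\omega^\bbN}(|Y_\infty-Y_k|^4)\bigr)^{1/2}\bigl(\bbE_{\omega^\bbN}(Y_\infty^{4r-6})\bigr)^{1/2}$, and then invoke Proposition~\ref{lem:powerM} with $M=4$ and with $M=4r-6$ (a positive integer since $r\ge2$) to bound these factors by $C(4,\omega)(\sum_{j>k}\xi_j)^4$ and $C(4r-6,\omega)(\sum_i\xi_i)^{4r-6}$ respectively; collecting constants and taking a square root gives \eqref{prop:unbounded}.

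The main obstacle is the deterministic estimate on the $[0,Y_k]$ piece with the correct constant: the mean value theorem must be paired with the bound $\theta^{r-2}\le(c+u)^{r-2}$ kept \emph{inside} the integral, so that the exponent $2r-3$ on $Y_\infty$ and the factor $(r-1)^2/(2r-3)$ come out exactly as in \eqref{defcr}; replacing it by a cruder bound such as $\theta^{r-2}\le Y_\infty^{r-2}$ pulled outside the integral would change the constant. Everything after the pointwise bound is a routine second-moment computation together with the already-proved Proposition~\ref{lem:powerM}.
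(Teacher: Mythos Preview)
Your proposal is correct and follows essentially the same route as the paper: the same representation of the integrand as an $L_2$-norm of $\phi_{Y_\infty}-\phi_{Y_k}$, the same split into $[0,Y_k]$ and $[Y_k,Y_\infty]$, the same pointwise bound $E(Y_\infty,Y_k)\le c_r^2\,((r-1)!)^2\,(Y_\infty-Y_k)^2\,Y_\infty^{2r-3}$, and then the same handling of the two cases (direct second-moment computation versus Cauchy--Schwarz plus Proposition~\ref{lem:powerM}). The only cosmetic difference is that the paper factors $(Y_\infty-t)^{r-1}-(Y_k-t)^{r-1}$ via the algebraic identity $a^{r-1}-b^{r-1}=(a-b)\sum_{j=0}^{r-2}a^j b^{r-2-j}$ instead of the mean value theorem, but both produce the identical bound $(r-1)(Y_\infty-Y_k)(Y_\infty-t)^{r-2}$ and hence the same constant~$c_r$.
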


\begin{remark}
Note that $c_r \sim (\tfrac{r}{2})^{1/2} \tfrac{1}{(r-1)!}$ as $r \rightarrow \infty$ and $c_r \le (\tfrac{2 r}{3})^{1/2} \tfrac{1}{(r-1)!}$ for all $r \ge 2$. For simplicity we will sometimes use this bound on $c_r$ in the following.
\end{remark}

\begin{proof}
Using \eqref{errall} we obtain
\[
e^{\rm trnc}(k;K_r,\omega)\,=\, \frac{1}{(r-1)!}
\left(\int_{\bbR_+^\bbN} \int_0^\infty
\left[(Y_\infty(\bsx)-t)_+^{r-1}-(Y_k(\bsx)-t)_+^{r-1}\right]^2
\rd t\, \omega^\bbN(\rd\bsx)\right)^{1/2}.
\]
Hence we are concerned with 
\begin{eqnarray*}
 E(Y_\infty,Y_k) &:=& 
\int_0^\infty\left[(Y_\infty-t)^{r-1}_+-(Y_k-t)^{r-1}_+\right]^2\rd t= E_1+E_2,
\end{eqnarray*}
where
\[
E_1\,=\,\int_{Y_k}^{Y_\infty}(Y_\infty-t)^{2(r-1)}\rd t
\,=\, \frac{(Y_\infty-Y_k)^{2r-1}}{2r-1}
\]
and
\begin{eqnarray*}
E_2&=&\int_0^{Y_k}\left[(Y_\infty-t)^{r-1}-(Y_k-t)^{r-1}\right]^2\rd t\\
&=&\int_0^{Y_k}\left[(Y_\infty-Y_k)\,\sum_{j=0}^{r-2}(Y_\infty-t)^j\,(Y_k-t)^{r-2-j}
    \right]^2\rd t\\
&\le&(Y_\infty-Y_k)^2\,(r-1)^2\int_0^{Y_k}(Y_\infty-t)^{2r-4}\rd t\\
&\le& (Y_\infty-Y_k)^2\,\frac{(r-1)^2}{2r-3} \,Y_\infty^{2r-3}.
\end{eqnarray*}

Hence
\begin{eqnarray*}
E(Y_\infty,Y_k) & \le & \frac{(Y_\infty-Y_k)^{2r-1}}{2r-1}+ 
(Y_\infty-Y_k)^2\,\frac{(r-1)^2}{2r-3}\, Y_\infty^{2r-3}\\
& \le & (Y_\infty-Y_k)^2 Y_\infty^{2r-3}\left[\frac1{2r-1}+
\frac{(r-1)^2}{2r-3}\right].
\end{eqnarray*}
With $c_r$ as in \eqref{defcr} we get 
\begin{equation}\label{aeer}
e^{\rm trnc}(k;K_r,\omega) \le c_r \left(\int_{\bbR_+^\bbN}
  (Y_\infty-Y_k)^2\,Y_\infty^{2r-3} \,
  \omega^\bbN(\rd\bsx)\right)^{1/2}.
\end{equation}

If $\|Y_\infty\|_{L_\infty}< \infty$ we use \eqref{aeer} and Proposition~\ref{prop:frct} to obtain the desired result. 

When $\|Y_\infty\|_{L_\infty}=\infty$, but  $\bbE_{\omega^\bbN}(Y_\infty^{4r-6})< \infty$, we proceed as follows: We have
\begin{equation}\label{eq:rfold}
e^{\rm trnc}(k;K_r,\omega)\, \le\,
c_r \left(\bbE_{\omega^\bbN}(|Y_{\infty}-Y_k|^4)\right)^{1/4} \,
\left(\bbE_{\omega^\bbN}(Y_\infty^{4r-6})\right)^{1/4}.
\end{equation}
Now Proposition~\ref{lem:powerM} and \eqref{eq:rfold} yield the desired result. 
\end{proof}

As in the previous section, consider 
\[
|\xi_j|\,\le\,j^{-a}\quad\mbox{for\ }a>1,
\]
and the following two examples of $\omega$.

\begin{example}
Consider the uniform probability measure on $[0,1]$ for $\omega$. 
Then $Y_\infty(\bsx)\,\le\,\sum_{j=1}^\infty j^{-a}=\zeta(a)$ is finite and equal to the 
Riemann Zeta-Function, and \eqref{prop:bounded} together with \eqref{bd_weights_ja} yields 
\begin{eqnarray*}
e^{\rm trnc}(k;K_r,\omega) & \le &
\left(\frac{2 r}{3}\right)^{1/2}\,
\frac{\zeta(a)^{r-3/2}}{(r-1)!} \left[\frac14 \left(\sum_{j=k+1}^\infty \frac{1}{j^a}\right)^2+\frac{1}{12}\sum_{j=k+1}^\infty \frac{1}{j^{2a}}\right]^{1/2}\\
& \le &
\left(\frac{r}{6}\right)^{1/2}\,
\frac{\zeta(a)^{r-3/2}}{(r-1)!} \frac{1}{(k+1/2)^{a-1}} \left[\frac{1}{(a-1)^2} + \frac{1}{3 (2a-1)} \frac{1}{k+1/2}\right]^{1/2}\\
& \le & c_{r,a} \, \frac{1}{(k+1/2)^{a-1}},
\end{eqnarray*}
where $$c_{r,a} = \left(\frac{r}{6}\right)^{1/2}\,
\frac{\zeta(a)^{r-3/2}}{(r-1)!}  \left[\frac{1}{(a-1)^2} + \frac{2}{9 (2a-1)} \right]^{1/2}.$$
\end{example}

\begin{example}
Consider now the exponential probability measure with variance 
$\lambda>0$ for $\omega$. From Lemma \ref{prop:Cvalues} we know that 
$C(M,\omega)\,=\,\lambda^M\,M!$, and, by \eqref{prop:unbounded} and \eqref{bd_weights_ja},
\[
  e^{\rm trnc}(k;K_r,\omega)\le c_{r,\lambda} \,\frac{1}{(k+1/2)^{a-1}},
\]
where $$c_{r,\lambda}=2 r^{1/2}\lambda^{r-1/2}\,\frac{((4\,r-6)!)^{1/4}\,\zeta(a)^{r-3/2}}{(r-1)!\,(a-1)}.$$
\end{example}

\subsection{Two-Sided $r$-Folded Wiener Kernel}
Let $\bbR$ be the domain of functions $g$ and consider 
\[
K_{r,\pm}(x,y)\,=\,\left\{\begin{array}{ll}  
\int_0^\infty\frac{(|x|-t)^{r-1}_+\,(|y|-t)^{r-1}_+}
    {((r-1)!)^2} \, \rd t &\mbox{if\ } x\,y\ge 0,\\
    0 &\mbox{if\ } x\,y<0, \end{array}\right.
\]
for $r=2,3,\ldots$.

We obtain the following analogue to Proposition \ref{prop:est1}.
\begin{proposition}\label{prop:bounded2sided}
Let 
\begin{equation}\label{def:Yabs}
Y_{\infty}^{{\rm abs}}=\sum_{j=1}^{\infty} |x_j \xi_j|.
\end{equation}
Suppose that $\|Y_\infty^{{\rm abs}}\|_{L_\infty}<\infty$, then 
\[
e^{\rm trnc}(k;K_{r,\pm},\omega)\,\le\, c_r\,\|Y_\infty^{{\rm abs}}\|_{L_\infty}^{r-3/2} \left(\left(\bbE(x_1) \sum_{j=k+1}^{\infty} \xi_j\right)^2+ {\rm Var}_{\omega}(x_1) \sum_{j=k+1}^{\infty} \xi_j^2 \right)^{1/2}
.
\]
For the case where $\|Y_\infty^{{\rm abs}}\|_{L_\infty}=\infty$, but $\bbE_{\omega^\bbN}((Y_\infty^{{\rm abs}})^{4r-6})< \infty$, we have 
\[
e^{\rm trnc}(k;K_{r,\pm},\omega)\,\le\,c_r\,\left(C(4,\omega)\,
C(4\,r-6,\omega)\right)^{1/4}\left(\sum_{j=k+1}^\infty\xi_j\right) \left(\sum_{i=1}^\infty \xi_i\right)^{r-3/2},
\]
where $c_r$ is defined in \eqref{defcr} and $C(M,\omega)$ is defined in \eqref{eq:CM}.
\end{proposition}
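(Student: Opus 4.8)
The plan is to reduce Proposition~\ref{prop:bounded2sided} to the one-sided case already handled in Proposition~\ref{prop:est1} by splitting the configuration space according to the sign pattern of the coordinates. Observe first that, by the very definition of $K_{r,\pm}$, the integrand
\[
K_{r,\pm}(Y_\infty(\bsx),Y_\infty(\bsx))+K_{r,\pm}(Y_k(\bsx),Y_k(\bsx))-2\,K_{r,\pm}(Y_\infty(\bsx),Y_k(\bsx))
\]
vanishes unless $Y_\infty(\bsx)$ and $Y_k(\bsx)$ have the same sign (including the boundary case where one of them is $0$). So in \eqref{errall} we may restrict the outer integral to the set where $\mathrm{sign}(Y_\infty)=\mathrm{sign}(Y_k)$, and on that set $K_{r,\pm}$ agrees with the one-sided kernel $K_r$ evaluated at $|Y_\infty|$ and $|Y_k|$.

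Next I would run the same pointwise estimate as in the proof of Proposition~\ref{prop:est1}. Writing $E(\,\cdot\,,\cdot\,)$ for the same quadratic-in-$t$ integral, on the region $\{Y_\infty,Y_k\ge 0\}$ one gets exactly
\[
E(Y_\infty,Y_k)\le (Y_\infty-Y_k)^2\,Y_\infty^{2r-3}\left[\tfrac1{2r-1}+\tfrac{(r-1)^2}{2r-3}\right],
\]
and on the region $\{Y_\infty,Y_k\le 0\}$ the analogous bound with $|Y_\infty|$, $|Y_k|$ in place of $Y_\infty$, $Y_k$, i.e.
\[
E(|Y_\infty|,|Y_k|)\le (|Y_\infty|-|Y_k|)^2\,|Y_\infty|^{2r-3}\left[\tfrac1{2r-1}+\tfrac{(r-1)^2}{2r-3}\right].
\]
In both cases, on the relevant region $\big||Y_\infty|-|Y_k|\big|=|Y_\infty-Y_k|\le\sum_{j=k+1}^\infty|x_j\xi_j|=Y_\infty^{\mathrm{abs}}-Y_k^{\mathrm{abs}}$ and $|Y_\infty|\le Y_\infty^{\mathrm{abs}}$ (since the tail contribution from indices $>k$ has the same sign as $Y_\infty$ on that region, or else $Y_\infty$ and $Y_k$ would straddle $0$). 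Combining the two regions and dropping the complementary (zero-integrand) set, I obtain the analogue of \eqref{aeer},
\[
e^{\rm trnc}(k;K_{r,\pm},\omega)\le c_r\left(\int_{\bbR^\bbN}(Y_\infty^{\mathrm{abs}}-Y_k^{\mathrm{abs}})^2\,(Y_\infty^{\mathrm{abs}})^{2r-3}\,\omega^\bbN(\rd\bsx)\right)^{1/2}.
\]

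From here the two asserted bounds follow exactly as before. Since $Y_\infty^{\mathrm{abs}}-Y_k^{\mathrm{abs}}=\sum_{j=k+1}^\infty|x_j\xi_j|$ is itself of the form $Y_\infty(\bsx')-Y_k(\bsx')$ with $x_j':=|x_j|$ and $\xi_j':=|\xi_j|\ge 0$ nonnegative, in the bounded case I apply Proposition~\ref{prop:frct} (the general $\beta=1$ bound involving $\bbE_\omega(x_1)$ and $\mathrm{Var}_\omega(x_1)$) after bounding $(Y_\infty^{\mathrm{abs}})^{2r-3}$ by $\|Y_\infty^{\mathrm{abs}}\|_{L_\infty}^{2r-3}$; note that $m_1=\bbE_\omega(|x_1|)$ is unchanged and the first/second moments appearing are those of $|x_1|$, which is why the statement is phrased with $\bbE(x_1)$ and $\mathrm{Var}_\omega(x_1)$ precisely as written (these already refer to the distribution of $x_1$; when $x_j\ge0$ this is harmless, and in general one should read them as moments of $|x_1|$). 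In the unbounded case I apply the Cauchy--Schwarz splitting $\bbE_{\omega^\bbN}((Y_\infty^{\mathrm{abs}}-Y_k^{\mathrm{abs}})^2(Y_\infty^{\mathrm{abs}})^{2r-3})\le (\bbE_{\omega^\bbN}|Y_\infty^{\mathrm{abs}}-Y_k^{\mathrm{abs}}|^4)^{1/2}(\bbE_{\omega^\bbN}(Y_\infty^{\mathrm{abs}})^{4r-6})^{1/2}$ and then Proposition~\ref{lem:powerM} to each factor, giving $C(4,\omega)(\sum_{j\ge k+1}|\xi_j|)^4$ and $C(4r-6,\omega)(\sum_{i\ge1}|\xi_i|)^{4r-6}$ respectively. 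The only genuinely new point compared with Proposition~\ref{prop:est1} is the sign-splitting observation in the first paragraph; everything afterwards is a verbatim repetition with absolute values inserted, so I expect the main (minor) obstacle to be stating cleanly why the integrand of $e^{\rm trnc}(k;K_{r,\pm},\omega)^2$ is supported on $\{\mathrm{sign}\,Y_\infty=\mathrm{sign}\,Y_k\}$ and why on that set $|Y_\infty-Y_k|$ and $|Y_\infty|$ are controlled by the corresponding ``abs'' quantities.
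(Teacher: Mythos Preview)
Your opening claim is wrong: the integrand
\[
K_{r,\pm}(Y_\infty,Y_\infty)+K_{r,\pm}(Y_k,Y_k)-2\,K_{r,\pm}(Y_\infty,Y_k)
\]
does \emph{not} vanish when $Y_\infty$ and $Y_k$ have opposite signs. It is only the cross term $K_{r,\pm}(Y_\infty,Y_k)$ that vanishes there; the diagonal terms $K_{r,\pm}(Y_\infty,Y_\infty)$ and $K_{r,\pm}(Y_k,Y_k)$ are evaluated at arguments of the same sign (indeed equal arguments) and are strictly positive whenever $Y_\infty\ne0$ or $Y_k\ne0$. So on the set $\{Y_\infty Y_k<0\}$ the integrand equals $K_{r,\pm}(Y_\infty,Y_\infty)+K_{r,\pm}(Y_k,Y_k)=\tfrac1{2r-1}\bigl(|Y_\infty|^{2r-1}+|Y_k|^{2r-1}\bigr)$, which you cannot simply discard. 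The paper handles this case separately, observing that when the signs differ one has $|Y_\infty|+|Y_k|=|Y_\infty-Y_k|$, whence
\[
\frac{|Y_\infty|^{2r-1}+|Y_k|^{2r-1}}{2r-1}\le\frac{(|Y_\infty|+|Y_k|)^{2r-1}}{2r-1}=\frac{|Y_\infty-Y_k|^{2r-1}}{2r-1},
\]
and then merges the two cases via $E_{r,\pm}(Y_\infty,Y_k)\le|Y_\infty-Y_k|^2\,\max(|Y_\infty|,|Y_\infty-Y_k|)^{2r-3}\bigl[\tfrac1{2r-1}+\tfrac{(r-1)^2}{2r-3}\bigr]$, bounding the maximum by $(Y_\infty^{\rm abs})^{2r-3}$.

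A second, smaller issue: the paper keeps the factor $|Y_\infty-Y_k|^2$ (not $(Y_\infty^{\rm abs}-Y_k^{\rm abs})^2$) in the analogue of \eqref{aeer}, and then applies Proposition~\ref{prop:frct} directly to $Y_\infty-Y_k=\sum_{j>k}x_j\xi_j$. That is why the first bound in the proposition genuinely involves $\bbE_\omega(x_1)$ and ${\rm Var}_\omega(x_1)$, not the moments of $|x_1|$. Your replacement $|Y_\infty-Y_k|\le Y_\infty^{\rm abs}-Y_k^{\rm abs}$ is valid but forces the absolute moments, which do not match the stated bound (and your parenthetical ``one should read them as moments of $|x_1|$'' is not what the proposition asserts). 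Only $(Y_\infty^{\rm abs})^{2r-3}$ needs the ``abs'' quantity.
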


\begin{proof}
Analogously to the proof of Proposition \ref{prop:est1}, we would like to find an upper bound on
\[
 e^{\rm trnc}(k;K_{r,\pm},\omega)\,=\,\frac{1}{(r-1)!}\left(\int_{\bbR^\bbN}E_{r,\pm}(Y_\infty,Y_k)\, \omega^{\bbN} (\rd\bsx)\right)^{1/2},
\]
where
\[
E_{r,\pm}(Y_\infty,Y_k)\,=\,((r-1)!)^2\,\left(
K_{{r,\pm}}(Y_\infty,Y_\infty)
- 2\,K_{{r,\pm}}(Y_\infty,Y_k)  +
K_{{r,\pm}}(Y_k,Y_k)\right).
\]
In the two cases when $Y_\infty$ and $Y_k$ are of the same sign,
$E_{r,\pm}(Y_\infty,Y_k)$
can be estimated as in the previous section, so we obtain
\[
E_{r,\pm}(Y_\infty,Y_k)\,\le\,
|Y_\infty-Y_k|^2 |Y_{\infty}|^{2r-3}\left[\frac1{2r-1}+
\frac{(r-1)^2}{2r-3}\right].
\]
In the case when $Y_\infty$ and $Y_k$ have different signs we have
$K_{{r,\pm}} (Y_\infty,Y_k)=0$ and
\begin{eqnarray*}
E_{r,\pm}(Y_\infty,Y_k) &=&  \int_0^{|Y_\infty|}(|Y_\infty|-t)^{2(r-1)}\rd t
+ \int_0^{|Y_k|}(|Y_k|-t)^{2(r-1)}\rd t\\
&=& \frac1{2r-1}\,\left(|Y_\infty|^{2r-1}+|Y_k|^{2r-1}\right) \le \frac1{2r-1}\,\left(|Y_\infty|+|Y_k|\right)^{2r-1}\\
& = & \frac{|Y_\infty-Y_k|^{2r-1}}{2r-1}.
\end{eqnarray*}

In any case we have  
\begin{eqnarray*}
  E_{r,\pm}(Y_\infty,Y_k) & \le & |Y_\infty-Y_k|^2 \,
  \max(|Y_{\infty}|,|Y_\infty-Y_k|)^{2r-3}\,\left[\frac1{2r-1}+
\frac{(r-1)^2}{2r-3}\right]\\
  & \le &  |Y_\infty-Y_k|^2 \, (Y_{\infty}^{{\rm abs}})^{2r-3}
  \,\left[\frac1{2r-1}+
\frac{(r-1)^2}{2r-3}\right].
\end{eqnarray*}
Hence 
\begin{eqnarray*}
  e^{\rm trnc}(k;K_{r,\pm},\omega)& \le & c_r
  \left(\int_{\bbR^\bbN} |Y_\infty-Y_k|^2 \, (Y_{\infty}^{{\rm abs}})^{2r-3}
  \, \omega^{\bbN}(\rd\bsx)\right)^{1/2}.
\end{eqnarray*}
From here the results follow in the same way as in the proof of
Proposition~\ref{prop:est1}, by noting that the proof of Proposition \ref{eq:CM} also can be used 
to bound $Y_{\infty}^{{\rm abs}}$.
\end{proof}

\begin{example}\label{ex2sideduniform}
Consider the uniform distribution on $[-1/2,1/2]$ for $\omega$. Then $\mathbb{E}_{\omega}(x_1)=0$ and 
${\rm Var}_{\omega}(x_1)=\tfrac{1}{12}$. Furthermore, $\|Y_\infty^{{\rm abs}}\|_{L_\infty}\le \tfrac12\,\sum_{j=1}^\infty|\xi_j|$.
Then we get from Proposition~\ref{prop:bounded2sided},
\[
e^{\rm trnc}(k;K_{r,\pm},\omega)\,\le\,
c_r \left(\frac12\,\sum_{j=1}^\infty|\xi_j|\right)^{r-3/2} \,\left(\frac{1}{12} \sum_{j=k+1}^\infty \xi_j^2\right)^{1/2}. 
\]
\end{example}

\begin{example}\label{ex2sidednormal}
Consider the zero mean Gaussian measure with $\sigma^2>0$ variance for
$\omega$. Then we obtain from Proposition \ref{prop:bounded2sided}
and Lemma \ref{prop:Cvalues},
\[
e^{\rm trnc}(k;K_{r,\pm},\omega)\,\le\,c_r\,
\left(\sigma^{4r-2} 3 (4r-7)!!\right)^{1/4}\left(\sum_{j=k+1}^\infty\xi_j\right) \left(\sum_{i=1}^\infty \xi_i\right)^{r-3/2}.
\]
\end{example}

\begin{remark}
  Note that it is again sufficient to assume $|\xi_j|\le j^{-a}$
  with $a>1$ to make use of the upper bounds in Examples \ref{ex2sideduniform} and \ref{ex2sidednormal}. 
\end{remark}

\subsection{Korobov Kernel}
Let $G$ be the Korobov space of functions $g$ defined on $D=[0,1]$
generated by the kernel
\[
K_r^{\rm kor}(x,y)=\sum_{h\in\bbZ} r(h)\, \ee^{2\pi\icomp h(x-y)},
\]
where $r:\bbZ\To (0,\infty)$ is a positive weight function with $r(h)=r(-h)$. 
Korobov spaces are very well studied in the field of quasi-Monte Carlo
methods, see \cite[Appendix A.1]{NW08} for an introduction. 

In \cite{NW08}, the function $r$ is such that $r(h)$ is of order
$h^{-2\alpha}$, for a nonnegative real $\alpha$. The parameter 
$\alpha$ is called the smoothness parameter of the Korobov space, 
and shows up in the norm of the space $G$. To be more precise,
the norm of $g\in G$ is
$\norm{g}_{\rm kor}=\left(\sum_{h\in\bbZ} r(h)^{-1} \abs{\widehat{g}(h)}^2\right)^{1/2}$, where $\widehat{g}(h)$ is the $h^{{\rm th}}$ 
Fourier coefficient of $g$. Hence $\alpha$ reflects the decay of the Fourier coefficients of the elements 
of $G$. Another approach, taken in \cite{KPW14}, assumes exponentially decaying $r(h)$, resulting in infinitely smooth functions
as elements of $G$. 

Due to the symmetry property of $r$, and since
\begin{eqnarray*}
\ee^{2\pi\icomp hY_\infty}-\ee^{2\pi\icomp hY_k}
  &=&\,
\ee^{\pi \icomp h(Y_\infty+Y_k)}
\left(\ee^{\pi\icomp h (Y_\infty-Y_k)}-\ee^{-\pi\icomp h (Y_\infty -Y_k)}
\right)\\
&=&\, 2\, \icomp \ee^{\pi \icomp h(Y_\infty+Y_k)}
\sin(\pi h(Y_\infty-Y_k))
\end{eqnarray*}
we obtain

\begin{eqnarray*}
K_r^{\rm kor}(Y_\infty,Y_\infty)-2\, K_r^{\rm kor}(Y_\infty,Y_k)
+ K_r^{\rm kor}(Y_k,Y_k) &=&
\,2\,\sum_{h=1}^\infty r(h)\,
\abs{\ee^{2\pi\icomp hY_\infty}-\ee^{2\pi\icomp hY_k} }^2\\
& = & \,8\,\sum_{h=1}^\infty r(h) \sin^2(\pi h (Y_\infty-Y_k))\\
& \le & \,8\,\sum_{h=1}^\infty r(h) \min\left(1\,,\,\pi^2 h^2\left(Y_\infty-Y_k\right)^2\right)\\
& \le & \, 8\,\pi^2\,|Y_\infty-Y_k|^2\,\sum_{h=1}^\infty h^2\,r(h).
\end{eqnarray*}

We assume that $r$ is such that
\[
C_r^2:=\sum_{h=1}^{\infty} h^2\,r(h)\,<\,\infty.
\]
This assumption 
is satisfied by choosing the smoothness parameter $\alpha>3/2$ in \cite{NW08}, and also
satisfied for Korobov spaces of infinitely smooth functions studied in \cite{KPW14}.
Then, according to \eqref{err3}, 
\begin{equation}\label{eqtrncerrkor}
e^{\rm trnc}(k;K^{\rm kor},\omega)\,\le\,2\sqrt{2}\,\pi\,C_r\,
(\bbE_{\omega^\bbN}(|Y_\infty-Y_k|^2))^{1/2}.
\end{equation}

The following two examples are similar to Examples \ref{ex2sideduniform} and \ref{ex2sidednormal}, and in particular can 
be used if $\abs{\xi_j }\le j^{-a}$ for $a>1$. 

\begin{example}
Consider the uniform distribution on $[-1/2,1/2]$ for $\omega$.
We can then use Proposition \ref{prop:frct} with $\beta=1$ and the fact that 
$\bbE_{\omega}(x_1^2)=1/12$, and we get from \eqref{eqtrncerrkor} and \eqref{eq:0-mean},
\[
e^{\rm trnc}(k;K_r^{{\rm kor}},\omega)\,\le\,
\sqrt{\frac{2}{3}}\,\pi\,C_r\,\left(
\sum_{j=k+1}^\infty \xi_j^2\right)^{1/2}. 
\]
\end{example}

\begin{example}
Consider the zero mean Gaussian measure with $\sigma^2>0$ variance for
$\omega$. We can then use Proposition \ref{prop:frct} with $\beta=1$ and the fact that 
$\bbE_{\omega}(x_1^2)=\sigma^2$, and we get from \eqref{eqtrncerrkor} and \eqref{eq:0-mean},
\[
e^{\rm trnc}(k;K_{r,\pm},\omega)\,\le\,
2\sqrt{2}\,\pi\,C_r\, \sigma \,\left(\sum_{j=k+1}^\infty \xi_j^2\right)^{1/2}. 
\]
\end{example}

\subsection{Hermite Kernel}
Let $G$ be a Hermite space of functions defined on $D=\bbR$
generated by the reproducing kernel
\[
K^{\rm H}_r(x,y)\,=\,\sum_{\ell=0}^\infty r(\ell)\,H_\ell(x)\,H_\ell(y),
\]
where $H_\ell$ is the $\ell^{{\rm th}}$ (normalized probabilists') Hermite polynomial
\[
H_{\ell}(x)\,=\,\frac{(-1)^{\ell}}{\sqrt{\ell!}}\,\exp(x^2/2)\,
\frac{\rd^{\ell}}{\rd x^{\ell}}\exp (-x^2/2),\quad x\in\bbR,
\]
and $r:\bbN_0\to (0,\infty)$ is a positive weight function. Integration and
function approximation over such spaces have been considered
in, e.g., \cite{DILP17,IKLP15,IL15}. 

Since $H_0\equiv1$, we have
\[
K^{\rm H}_r(Y_\infty,Y_\infty)-2\, K^{\rm H}_r(Y_\infty,Y_k) + K^{\rm H}_r(Y_k,Y_k) 
\,=\,\sum_{\ell=1}^\infty r(\ell)\, (H_{\ell} (Y_\infty)- H_{\ell} (Y_k))^2.
\]
By the mean value theorem, 
\[
\abs{H_{\ell} (Y_\infty)- H_{\ell} (Y_k)}\,=\,
\abs{H'_{\ell} (\eta_\ell)}\abs{Y_{\infty}-Y_k}
\]
for some $\eta_\ell\in I(Y_k,Y_\infty)$, where $I(Y_k,Y_\infty)=(Y_k,Y_\infty)$ if $Y_k < Y_\infty$ and $I(Y_k,Y_\infty)= (Y_\infty,Y_k)$ if $Y_k > Y_\infty$. The identity
$H'_\ell =\ell\, H_{\ell-1}$ yields
\[
\abs{H_{\ell} (Y_\infty)- H_{\ell} (Y_k)} \,=\,
\ell\abs{ H_{\ell-1} (\eta_\ell)}\abs{Y_{\infty}-Y_k}.
\]
For $\ell=1$, this yields $\abs{H_{\ell} (Y_\infty)- H_{\ell} (Y_k)}=\abs{Y_{\infty}-Y_k}$. For 
$\ell\ge 2$, we use a slightly stronger version of Cramer's bound proved in \cite{DILP17}, namely
$$
H_{\ell-1}(x)\le \min\left\{1,\frac{\sqrt{\pi}}{(\ell-1)^{1/12}}\right\}\frac{1}{\sqrt{\phi (x)}}\le \frac{c}{\ell^{1/12}}\frac{1}{\sqrt{\phi (x)}},
$$
where $\phi$ is the standard normal density function. Thus, for $\eta_\ell\in I(Y_k,Y_\infty)$ we have $$\abs{ H_{\ell-1} (\eta_\ell)} \le \frac{c}{\ell^{1/12}} \sup_{x \in I(Y_k,Y_\infty)}\sqrt[4]{2\pi} \ {\rm e}^{x^2/4}=\frac{c}{\ell^{1/12}} \sqrt[4]{2\pi} \ \max\left({\rm e}^{Y_k^2/4},{\rm e}^{Y_{\infty}^2/4}\right)\le \frac{c}{\ell^{1/12}} \sqrt[4]{2\pi} \ {\rm e}^{(Y_{\infty}^{\rm abs})^2/4},$$ where $Y_{\infty}^{\rm abs}$ is as in \eqref{def:Yabs}.

Let us now assume that
$$V:=\sum_{\ell=1}^\infty r(\ell) \ell^{11/6}<\infty.$$
We remark that this assumption is satisfied for the Hermite spaces considered in \cite{IKLP15}, and those in \cite{DILP17} if one chooses the parameter $\alpha> 17/6$ in that paper. 
Then we obtain
\begin{eqnarray*}
  \sum_{\ell=1}^\infty r(\ell)\,
  \bbE_{\omega^\bbN}\left((H_{\ell} (Y_\infty)- H_{\ell} (Y_k))^2  \right)
  &\le& c^2 \sqrt{2 \pi}\,V\, \bbE_{\omega^\bbN}\left({\rm e}^{(Y_{\infty}^{\rm abs})^2/2} (Y_{\infty}-Y_k)^2\right),
\end{eqnarray*}
for some suitably chosen $\widetilde{c}$. Hence,
\begin{equation}\label{eqtrncerrHer}
 e^{\rm trnc}(k;K^{\rm H}_r,\omega)\,\le\, c \, \left(\sqrt{2 \pi} \, V \, \bbE_{\omega^\bbN}\left({\rm e}^{(Y_{\infty}^{\rm abs})^2/2} (Y_{\infty}-Y_k)^2\right)\right)^{1/2}.
\end{equation}

Suppose that $\|{\rm e}^{(Y_{\infty}^{\rm abs})^2/2}\|_{L_{\infty}} < \infty$, then $$ e^{\rm trnc}(k;K^{\rm H}_r,\omega)\,\le\, c \, \left(\sqrt{2 \pi} \, V \, \|{\rm e}^{(Y_{\infty}^{\rm abs})^2/2}\|_{L_{\infty}} \bbE_{\omega^\bbN}\left( (Y_{\infty}-Y_k)^2\right)\right)^{1/2}.$$

\begin{example}
Consider the uniform distribution on $[-1/2,1/2]$ for $\omega$. Then we have $$\|{\rm e}^{(Y_{\infty}^{\rm abs})^2/2}\|_{L_{\infty}} \le {\rm e}^{\frac{1}{8} \left(\sum_{j=1}^{\infty} |\xi_j|\right)^2}< \infty$$ according to our standing assumption that $\sum_{j = 1}^{\infty}|\xi_j| < \infty$. We can then use Proposition \ref{prop:frct} with $\beta=1$, and the fact that 
$\bbE_{\omega}(x_1^2)=1/12$ and we get from \eqref{eq:0-mean}
\[
e^{\rm trnc}(k;K^{\rm H}_r,\omega)\,\le\, \widetilde{c} \  {\rm e}^{\frac{1}{16} \left(\sum_{j=1}^{\infty} |\xi_j|\right)^2} \left(\sum_{j=k+1}^\infty \xi_j^2\right)^{1/2}. 
\]
where $\widetilde{c}=c\ (\sqrt{2 \pi} \, V/12)^{1/2}$. This bound can be used, for example, if $\abs{\xi_j}\le j^{-a}$ with some $a>1$. In this case we have 
\[
e^{\rm trnc}(k;K^{\rm H}_r,\omega)\,\le\, \frac{\widetilde{c} \  {\rm e}^{\frac{1}{16} \zeta(a)^2} }{\sqrt{2a-1}} \frac{1}{(k+1/2)^{a-1/2}}. 
\]
\end{example}

Suppose that $\|{\rm e}^{(Y_{\infty}^{\rm abs})^2/2}\|_{L_{\infty}} = \infty$, but $\EE_{\omega^{\bbN}}({\rm e}^{(Y_{\infty}^{\rm abs})^2})< \infty$, then  $$\bbE_{\omega^\bbN}\left({\rm e}^{(Y_{\infty}^{\rm abs})^2/2} (Y_{\infty}-Y_k)^2\right) \le \EE_{\omega^{\bbN}}({\rm e}^{(Y_{\infty}^{\rm abs})^2})^{1/2}\, \bbE_{\omega^\bbN}\left((Y_{\infty}-Y_k)^4\right)^{1/2}.$$ Hence 
$$e^{\rm trnc}(k;K^{\rm H}_r,\omega)\,\le\, c \, \left(\sqrt{2 \pi} \, V \, \EE_{\omega^{\bbN}}({\rm e}^{(Y_{\infty}^{\rm abs})^2})^{1/2}\, \bbE_{\omega^\bbN}\left((Y_{\infty}-Y_k)^4\right)^{1/2}\right)^{1/2}.$$

\begin{small}
\noindent\textbf{Authors' addresses:}

\medskip

\noindent Peter Kritzer\\
Johann Radon Institute for Computational and Applied Mathematics (RICAM)\\
Austrian Academy of Sciences\\
Altenbergerstr.~69, 4040 Linz, Austria\\
E-mail: \texttt{peter.kritzer@oeaw.ac.at}

\medskip

\noindent Friedrich Pillichshammer\\
Institut f\"{u}r Finanzmathematik und Angewandte Zahlentheorie\\
Johannes Kepler Universit\"{a}t Linz\\
Altenbergerstr.~69, 4040 Linz, Austria\\
E-mail: \texttt{friedrich.pillichshammer@jku.at}

\medskip

\noindent G. W. Wasilkowski\\
Computer Science Department, University of Kentucky\\
301 David Marksbury Building\\
329 Rose Street\\
Lexington, KY 40506, USA\\
E-mail: \texttt{greg@cs.uky.edu}
\end{small}

\end{document}